\documentclass[12pt]{amsart}

\textwidth=5.5in \textheight=8.5in
\usepackage{cancel}
\usepackage{latexsym, amssymb, amsmath,esint}
\usepackage{soul}
\usepackage{amsfonts, graphicx}
\usepackage{graphicx,color}
\newcommand{\be}{\begin{equation}}
\newcommand{\ee}{\end{equation}}
\newcommand{\beq}{\begin{eqnarray}}
\newcommand{\eeq}{\end{eqnarray}}

\newtheorem{thm}{Theorem}[section]

\newtheorem{lma}{Lemma}[section]
\newtheorem{prop}{Proposition}[section]
\newtheorem{cor}{Corollary}[section]

\newtheorem{que}{Question}[section]
\theoremstyle{remark}
\newtheorem{rem}{Remark}[section]
\numberwithin{equation}{section}

\def\be{\begin{equation}}
\def\ee{\end{equation}}
\def\bee{\begin{equation*}}
\def\eee{\end{equation*}}

\def\lf{\left}
\def\ri{\right}

\def\Ric{\text{\rm Ric}}

\def\wt{\widetilde}

\def\p{\partial}

\def\heat{\lf(\frac{\p}{\p t}-\Delta\ri)}
\def\tr{\operatorname{tr}}

\def\e{\varepsilon}

\def\a{{\alpha}}
\def\b{{\beta}}

\begin{document}

\title[]
{Rigidity of contracting map using harmonic map heat flow}

 \author{Man-Chun Lee}
\address[Man-Chun Lee]{Department of Mathematics, The Chinese University of Hong Kong, Shatin, Hong Kong, China}
\email{mclee@math.cuhk.edu.hk}

\author{Jingbo Wan}
\address[Jingbo Wan]{Department of Mathematics, Columbia University, New York, NY, 10027}
 \email{jingbowan@math.columbia.edu}

\date{\today}

\begin{abstract} 
Motivated by a question of Tsai-Tsui-Wang, we consider the rigidity of map from manifolds with positive Ricci curvature to manifolds with positive sectional curvature. We show that if the Ricci curvature of the domain dominates that of the target, then distance non-increasing maps must be either Riemannian submersion or isometry. The rigidity result also holds on a wider class of manifolds with positive curvature and weaker contracting property on the map. This is based on a new long-time existence of harmonic map heat flow.
\end{abstract}

\maketitle

\markboth{Man-Chun Lee, Jingbo Wan}{}
\section{introduction}

Let $M$ and $N$ be two compact Riemannian manifolds. There has been great interest in finding a local geometric condition which determine the homotopy class of map from $M$ to $N$. When $M$ and $N$ are both sphere (of possibly different dimensions), $1$-dilatation map or equivalently distance decreasing map, has been considered and studied extensively, \cite{DeturckGluckStorm,Gromov1978,Hsu1972,Lawson1986}. It is not difficult to see that any distance decreasing map between spheres are homotopically trivial. In \cite{Gromov1996}, Gromov proposed to consider maps which are area-decreasing. It was conjectured that any area-decreasing map from $\mathbb{S}^n$ to $\mathbb{S}^m$ must be homotopically trivial.

In \cite{TsuiWang2003}, Tsui-Wang confirmed the conjecture affirmatively. Their method is based on the graphical mean curvature flow. By embedding $M$ into $M\times N$ using $\mathrm{id}\times f$, they view the initial map as a graph. Their novel idea is to deform the graph using the mean curvature flow which is the negative gradient flow of the volume functional. By showing that the mean curvature flow remains graphical and converges to a constant graph smoothly, the  homotopically triviality of initial map follows. The main difficulties lies on establishing the long-time existence and convergence. This boils down to show that area-decreasing property is preserved along the flow. Efforts have been made in generalizing the result of Tsui-Wang to more general curvature conditions in both the ambient spaces and target manifolds. For instances, see \cite{AssimosSavasSmoczyk2023,LeeLee2011,SavasSmoczyk2014}. The mentioned works are all based on the studying the symmetric $2$-tensor introduced by Tsui-Wang \cite{TsuiWang2003} whose positivity is equivalent to the area decreasing conditions. Very recently, this stream of technique has been improved significantly by Tsai-Tsui-Wang \cite{TsaiTsuiWang2023} in which now includes maps between complex projective spaces, and maps from spheres to complex projective spaces, among others. We summarize their result as follows. 
\begin{thm}[Tsui-Wang \cite{TsuiWang2003}, Tsai-Tsui-Wang \cite{TsaiTsuiWang2023}]\label{TTW} Let $(\Sigma_1,g_1)$, $(\Sigma_2,g_1)$ be two compact, Riemannian manifolds. Let $n=\dim \Sigma_1$, $m=\dim \Sigma_2$. Let $f:\Sigma_1\rightarrow\Sigma_2$ be any area-decreasing map, then $f$ must be homotopically trivial if one of the following holds:
\begin{enumerate}
    \item[(a)] $n\geq 2$, sectional curvatures of $(\Sigma_1,g_1)$ and $(\Sigma_2,g_1)$ are constant and satisfy
    \[\mathrm{sec}(\Sigma_1)\geq |\mathrm{sec}(\Sigma_2)| \quad,\quad \mathrm{sec}(\Sigma_1)+\mathrm{sec}(\Sigma_2)>0\]
    \item[(b)] $n\geq m\geq 2$,  sectional curvatures of $(\Sigma_1,g_1)$ and $(\Sigma_2,g_1)$ satisfy
    \[\mathrm{sec}(\Sigma_1)\geq 1 \quad,\quad \mathrm{sec}(\Sigma_2)< \frac{2n-m-1}{m-1}\]
    \item[(c)] $n\geq m\geq 2$, Ricci and sectional curvatures of $(\Sigma_1,g_1)$, $(\Sigma_2,g_1)$ satisfy
    \[\frac{\Ric_1}{g_1}\geq \frac{\Ric_2}{g_2} \quad,\quad   \mathrm{sec}(\Sigma_1)+\mathrm{sec}(\Sigma_2)> 0 .\]
\end{enumerate}
\end{thm}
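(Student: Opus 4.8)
The plan is to run the graphical mean curvature flow, following Tsui--Wang and Tsai--Tsui--Wang. Given $f:\Sigma_1\to\Sigma_2$, embed $\Sigma_1$ into the product $(\Sigma_1\times\Sigma_2,\,g_1\oplus g_2)$ by $\mathrm{id}\times f$, so that the graph $\Gamma_0=\{(x,f(x)):x\in\Sigma_1\}$ is an $n$-dimensional embedded submanifold, and evolve it by mean curvature flow $\partial_t F=\vec H$. Since the ambient manifold is compact, short-time existence is standard, and the flow is the negative gradient flow of the volume functional. The aim is to show that $\Gamma_t$ stays a graph over $\Sigma_1$, exists for all time, and converges smoothly to a slice $\Sigma_1\times\{p\}$; reading off the homotopy $t\mapsto\Gamma_t$ then exhibits $f$ as homotopically trivial.

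The graphical property is equivalent to $\ast\Omega_1>0$ on $\Gamma_t$, where $\Omega_1$ is the pullback of $\mathrm{vol}_{g_1}$; writing $\lambda_1,\dots,\lambda_{\min(n,m)}$ for the singular values of $df$ in suitable orthonormal frames, $\ast\Omega_1=\prod(1+\lambda_i^2)^{-1/2}$, and the area-decreasing condition reads $\lambda_i\lambda_j<1$ for all $i\neq j$. The heart of the proof is to show that the area-decreasing condition is preserved along the flow, together with a uniform positive lower bound for $\ast\Omega_1$. To this end one introduces the Tsui--Wang symmetric $2$-tensor (or, for maps of higher rank, the refined $2$-form of Tsai--Tsui--Wang) on $\Gamma_t$ whose positivity is equivalent to strict area-decrease, derives its evolution equation under the flow, and applies Hamilton's maximum principle for tensors. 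The reaction term decomposes into a contribution from the second fundamental form of $\Gamma_t$, which has a favorable sign once the tensor is positive, and a contribution from the ambient curvature of $\Sigma_1\times\Sigma_2$; the hypotheses in (a), (b), (c) are precisely what forces the ambient term to be non-negative on the null directions of the tensor. Case (a) exploits the constant-curvature identities; cases (b) and (c) require more careful bookkeeping of sectional versus Ricci curvature, with the threshold $\frac{2n-m-1}{m-1}$ emerging from the dimension count in the relevant trace.

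Once strict area-decrease and a lower bound on $\ast\Omega_1$ are known to persist, the flow has a uniformly bounded second fundamental form by a Bernstein-type estimate in this graphical setting (or by White's regularity theorem), hence long-time existence with uniform $C^\infty$ bounds. For convergence, the area of $\Gamma_t$ is non-increasing and bounded below, so $\int_0^\infty\int_{\Gamma_t}|\vec H|^2<\infty$, and the uniform estimates upgrade this to smooth subconvergence to a minimal graph $\Gamma_\infty$; the strict positivity $\mathrm{sec}(\Sigma_1)+\mathrm{sec}(\Sigma_2)>0$ (or its analogue in each case) forces $\Gamma_\infty$ to be totally geodesic and in fact a slice, so the limit map is constant. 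Since the homotopy type is constant along the flow, $f$ is homotopically trivial.

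The main obstacle is the preservation of the area-decreasing cone. This is a delicate tensor-maximum-principle computation: one must select the correct tensor or $2$-form (the naive choice fails once $\mathrm{rank}\,df\geq 2$, which is why the refinement is needed), diagonalize $df$, separate the intrinsic and ambient reaction terms, and verify the sharp inequalities among the products $\lambda_i\lambda_j$ and the curvature quantities — the appearance of the dimension-dependent constant in (b) signals that, after the cancellations, essentially no slack remains.
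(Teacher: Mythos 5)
The paper does not prove Theorem~\ref{TTW}: it is stated as a result of Tsui--Wang and Tsai--Tsui--Wang and used purely as motivation, so there is no in-paper argument to compare against. Your sketch accurately reproduces the graphical mean-curvature-flow strategy of those references---embed the graph in $\Sigma_1\times\Sigma_2$, evolve by mean curvature flow, preserve positivity of the Tsui--Wang tensor (or the Tsai--Tsui--Wang refinement) by a tensor maximum principle, deduce uniform graphicality, long-time existence, and smooth convergence to a slice---but it is a road map rather than a proof. The central claim, that the area-decreasing cone is preserved under each of the curvature hypotheses (a), (b), (c), is asserted without the computation, and that computation is precisely where the content of the theorem lies: in case (b) the sharp threshold $\frac{2n-m-1}{m-1}$ requires exact bookkeeping of the ambient curvature contribution on the null directions of the tensor, and in case (c) one must see why a Ricci lower bound on $\Sigma_1$ suffices even though the reaction term is naturally expressed in sectional curvatures. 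Likewise, the passage from $\int_0^\infty\int_{\Gamma_t}|H|^2<\infty$ to subconvergence to a constant slice needs the uniform positive lower bound on $*\Omega_1$ and the strict positivity of the ambient curvature in the relevant directions; you name these inputs but do not verify them.

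Be aware also that the present paper's own results (Theorem~\ref{intro-main} and Theorem~\ref{intro-thm2}) are proved by a genuinely different method: the $\phi$-weighted harmonic map heat flow, with preservation of $2$-nonnegativity of $g-F^*h$ (Proposition~\ref{prop:tensorMP}), a monotone weighted energy, and a Bochner identity applied to the limiting totally geodesic map. The authors contrast the two approaches explicitly in the remarks following Conjecture~\ref{conj}; your proposal describes the alternative (mean-curvature-flow) route, not the one developed in this paper.
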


On the other hand in scalar curvature geometry, a Theorem of Llarull \cite{Llarull1998} states that area non-increasing map from a compact spin manifold $M^n$ with $\mathrm{scal}(M)\geq n(n-1)$, to $\mathbb{S}^n$ can either have zero degree or is an isometry. Motivated the above mentioned works, it was asked by Tsai-Tsui-Wang \cite{TsaiTsuiWang2023} whether an area non-increasing map between complex projective spaces is an isometry. Building upon the ideas in \cite{TsuiWang2003,TsaiTsuiWang2023}, the authors and Tam \cite{LeeTamWan2023} develop a strong maximum principle argument to study area non-increasing maps and thus answers the question of Tsai-Tsui-Wang affirmatively. Furthermore, the result of \cite{TsaiTsuiWang2023} had been extended to curvature which is mildly weaker than sectional curvature. We refer interested readers to \cite[Theorem 1.1]{LeeTamWan2023}.

In this work, we are interested in the homotopy and rigidity of maps in Ricci geometry. Our main result is the following. 
\begin{thm}\label{intro-main}
Let $(M^n,g)$ and $(N^m,h)$ be two compact manifolds with $\ell=\min\{m,n\}\geq 2$.  Suppose their curvatures satisfy
\begin{enumerate}
\item[(a)] $\mathrm{sec}(h)> 0$;
\item[(b)] there exists $\phi\in C^\infty(M)$ such that for all unit vector $u\in TM$ and $v\in TN$, 
$$\Ric^\phi_g(u,u)\geq \Ric_h(v,v)$$
where $\Ric^\phi=\Ric+\nabla^2 \phi$.
\end{enumerate}
If $f:M\to N$ is a smooth map which is not  is homotopically trivial so that $g-f^*h$ is $2$-nonnegative, then we must have $m\leq n$ and 
\begin{enumerate}
    \item[(I)] if $n>m$, then $f$ is a Riemanian submersion;
    \item[(II)] if $m=n$, then $f$ is a local isometry and $\phi$ is a constant on each connected component of $M$. Moreover, $M,N$ are Einstein manifolds with positive sectional curvature.
\end{enumerate}
\end{thm}

\begin{rem}
We remark here that since condition (b) holds for arbitrary points and unit vector, it is equivalent to say that the minimum of weighted  Ricci curvature lower bound of $g$ is larger than or equal to the maximum of the Ricci curvature of $h$.
\end{rem}

In particular, the Theorem applies when $M$ and $N$ are either $\mathbb{CP}^n$, $\mathbb{S}^n$ or more generally Einstein manifolds with positive Einstein constant, although they had been taken care by method in mean curvature flow. In the case of $M=N=$ $\mathbb{CP}^n$ or $\mathbb{S}^n$, we know $f$ must be an isometry because the domain is connected and target is simply connected. 

Our method is different from the approach taken in the earlier works. This is inspired by the recent work of the first named author and Tam \cite{LeeTam2022}. Instead of using mean curvature flow, we consider deformation using the harmonic map heat flow which is routine enough to treat the $\infty$-Bakry-Emery Ricci curvature (and hence the $N$-Bakry-Emery Ricci curvature). Notice that there exist many interesting examples of manifolds with $N$-Bakry-Emery Ricci lower bound, for instances, see \cite{WeiWylie2009}.  The harmonic map heat flow is introduced by Eells-Sampson \cite{EellsSampson1964} which aims to deform a given map to a harmonic one. If the target manifold $N$ is not non-positively curved, the flow will in general form finite time singularity. In case the Ricci curvature of $M$ dominate that of $N$, we are able to rule out the finite time singularity under $2$-nonnegativity condition of $g-f^*h$. This condition is stronger than the area non-increasing but weaker than the distance non-increasing. More precisely, if we take $\lambda_i$ to be the singular values of $df$ with respect to $g,h$, then $2$-nonnegativity condition of $g-f^*h$ is equivalent to say that $\lambda_i^2+\lambda_j^2\leq 2$ for all distinct $i,j$. This contractive property is in between distance non-increasing $||df||_\infty\leq 1$ and area non-increasing $||\Lambda^2 df||_\infty\leq 1$.  More concretely, we show that the $2$-nonnegativity condition will be preserved under the harmonic map heat flow in a similar spirit of \cite{TsuiWang2003} which avoid finite time singularity. In \cite{StepanovTsyganok2017}, a similar condition is considered in term of vanishing theorem for harmonic maps. In contrast with the mean curvature flow approach, the harmonic map heat flow is considerably more linear and straight forward. However, it seems out of reach to handle the area non-increasing maps as it is more sensitive to $\sigma_1$ operator rather than $\sigma_n$ operator.




On the other hand, the harmonic map heat flow seems to be more flexible in requiring the sectional curvature of $M$ even in the content of detecting homotopically triviality.  Along this direction, we have the following which might be of independent interest. 
\begin{thm}\label{intro-thm2} 
    Let $(M^n,g)$ and $(N^m,h)$ be two compact manifolds with $\ell=\min\{m,n\}\geq 2$.  Suppose their curvatures satisfy
\begin{enumerate}
\item[(i)] $\mathrm{sec}(h)\geq  0$;
\item[(ii)] there exists $\phi\in C^\infty(M)$ such that for all unit vector $u\in TM$ and $v\in TN$, 
$$\Ric^\phi_g(u,u)\geq \Ric_h(v,v).$$
\end{enumerate}
Let $f:M\to N$ be a smooth map so that $g-f^*h$ is $2$-positive, then $f$ is homotopically trivial if one of the following holds:
\begin{enumerate}
\item[(a)] $\Ric_g^\phi$ is positive somewhere;
\item[(b)] $\phi$ is a constant on each connected component of $M$ and $\mathrm{scal}(g)$ is positive somewhere.
\end{enumerate}
\end{thm}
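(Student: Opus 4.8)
The plan is to run the harmonic map heat flow $\partial_t f_t = \tau(f_t)$ (tension field flow) starting from $f_0 = f$, and to show that it exists for all time and converges to a constant map, which forces $f$ to be homotopically trivial. The argument has three parts: (1) long-time existence, via preservation of the $2$-positivity of $g - f_t^* h$; (2) convergence to a constant map; and (3) extracting the two stated alternatives (a), (b) from the analysis of the limiting/energy behavior.

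First I would establish that $2$-positivity of $g - f_t^* h$ is preserved along the flow. Writing $\lambda_1 \geq \lambda_2 \geq \cdots$ for the singular values of $df_t$, this means $\lambda_1^2 + \lambda_2^2 < 2$ pointwise; equivalently the smallest two eigenvalues of $g - f_t^* h$ are positive. The standard computation (as in Eells-Sampson and in the spirit of Tsui-Wang) gives an evolution equation for $f_t^* h$ of the form $(\partial_t - \Delta)(f_t^* h) = (\text{curvature terms involving } \Ric_g, \Ric_h, \Rm_N) + (\text{negative-definite Bochner gradient terms})$. One then runs a tensor maximum principle (Hamilton's maximum principle for the two smallest eigenvalues of a symmetric $2$-tensor, handling the null-eigenvector ODE): at a first point where $\lambda_i^2 + \lambda_j^2 \to 2$ for some pair, one must check the reaction term has the right sign. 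This is exactly where hypotheses (i) $\mathrm{sec}(h) \geq 0$ and (ii) $\Ric_g^\phi(u,u) \geq \Ric_h(v,v)$ enter — note the flow for the $\phi$-weighted problem should really be $\partial_t f_t = \tau(f_t) + df_t(\nabla \phi)$ so that $\Ric_g$ gets replaced by $\Ric_g^\phi = \Ric + \nabla^2\phi$ in the Bochner/Weitzenböck identity, which is the whole point of using the $\infty$-Bakry-Émery tensor. The sign check at the boundary pair of singular values, using $\mathrm{sec}(h) \geq 0$ to kill the bad $N$-curvature contribution and (ii) to dominate the remaining linear term, shows $2$-positivity is strictly preserved; combined with the a priori energy bound this rules out finite-time singularities and gives long-time existence with uniform $C^\infty$ bounds (by Eells-Sampson type estimates once the image is controlled).

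Next, convergence: since the energy $E(f_t)$ is nonincreasing and bounded below, $\int_0^\infty \|\tau(f_t)\|_{L^2}^2\, dt < \infty$, so along a sequence $f_{t_k}$ converges smoothly to a harmonic map $f_\infty$ with $g - f_\infty^* h$ still $2$-nonnegative. Now I would use the Bochner formula for the energy density $e(f_\infty)$ of a harmonic map: $\Delta e(f_\infty) = |\nabla df_\infty|^2 + \langle df_\infty \Ric_g^\phi, df_\infty\rangle - (\text{terms with } \Rm_N)$ — more precisely the Eells-Sampson identity, which under $\mathrm{sec}(h)\geq 0$ and (ii) gives $\Delta e(f_\infty) \geq |\nabla df_\infty|^2 \geq 0$, so $e(f_\infty)$ is constant and $\nabla df_\infty = 0$; moreover all the inequality terms must vanish identically. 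In case (a), $\Ric_g^\phi > 0$ at some point: the vanishing of $\langle df_\infty \Ric_g^\phi, df_\infty\rangle$-type term at that point forces $df_\infty = 0$ there, hence (being parallel) $df_\infty \equiv 0$, so $f_\infty$ is constant and $f \simeq f_\infty$ is homotopically trivial. In case (b), $\phi$ constant and $\mathrm{scal}(g) > 0$ somewhere: tracing the vanishing identity gives $\mathrm{scal}(g)\, e(f_\infty) \leq (\text{nonpositive curvature contribution from } N) \leq 0$ at that point (using $\mathrm{sec}(h)\geq 0$), forcing $e(f_\infty) = 0$ there and again $f_\infty$ constant.

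The main obstacle I expect is the tensor maximum principle for $2$-positivity in Step 1 — specifically, organizing the reaction term at a critical pair of singular values and verifying its sign using only $\mathrm{sec}(h)\geq 0$ (not a two-sided bound) together with the weighted inequality (ii), while correctly accounting for the drift term $df_t(\nabla\phi)$ in the weighted flow; the bookkeeping of the off-diagonal terms mixing different singular directions (as in the Tsui-Wang $2$-tensor computation) is the delicate point. A secondary technical point is justifying uniform higher-order estimates for all time once the energy density is shown to be bounded, but this is by now standard for the harmonic map heat flow with controlled target geometry.
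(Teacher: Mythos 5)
Your overall framework is the same as the paper's: run the $\phi$-weighted harmonic map heat flow, preserve $2$-positivity via a tensor maximum principle, establish long-time existence and convergence to a totally geodesic map $f_\infty$, and apply a Bochner-type argument. Your long-time existence and convergence steps are fine. The gap is in the rigidity step, and it is exactly the point where this theorem differs from Theorem \ref{intro-main}.

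You pass to the limit keeping only that $g-f_\infty^*h$ is ``$2$-nonnegative,'' and then argue that since $\Delta e(f_\infty)\geq |\nabla df_\infty|^2$ and $e(f_\infty)$ is constant, ``all the inequality terms must vanish.'' But what this gives is only that the \emph{combined} curvature term $(R^\phi)^{ij}F_i^\alpha F_j^\beta h_{\alpha\beta}-g^{ij}g^{kl}(F^*\mathrm{Rm}_h)_{iklj}$ vanishes pointwise, i.e.\ $\sum_i (R^\phi)_{ii}\lambda_i^2 = \sum_{i,k}\tilde R_{ikki}\lambda_i^2\lambda_k^2$. This equality does \emph{not} force $\langle df_\infty\,\Ric^\phi_g,df_\infty\rangle=0$ at any point: both sides can be equal and positive. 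Working through the chain of inequalities used to prove nonnegativity of the curvature term, equality only yields constraints of the form ``$\lambda_i=0$ or $(R^\phi)_{ii}=\tilde R_{ii}$'' and ``$|\lambda_i|=|\lambda_k|$ whenever $\tilde R_{ikki}>0$''; with only $\mathrm{sec}(h)\geq 0$ these are too weak — if, for example, $\mathrm{sec}(h)$ is zero in many $2$-planes, most constraints are vacuous. Likewise, your case (b) claim that ``tracing'' the vanishing identity gives $\mathrm{scal}(g)\,e(f_\infty)\leq 0$ is not what tracing produces; the Ricci contribution is $\sum_i R_{ii}\lambda_i^2$, which is not a multiple of $\mathrm{scal}(g)\cdot e(f_\infty)$ in general.

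What the paper does instead — and this is the essential new ingredient compared to Theorem \ref{intro-main} — is to apply Proposition \ref{prop:tensorMP} with $\varepsilon>0$: the strict $2$-positivity of $g-f^*h$ is preserved with a \emph{uniform gap}, i.e.\ $(1-\varepsilon)^2g-F^*h$ stays $2$-nonnegative, so in the limit $|\lambda_i\lambda_j|\leq 1-\varepsilon$ for $i\neq j$. This upgrades the curvature estimate to the quantitative bound
\begin{equation*}
2\sum_{i,k}\lambda_i^2\lambda_k^2\,\tilde R_{ikki}\ \leq\ 2(1-\varepsilon)\sum_i\lambda_i^2\,\tilde R_{ii}\ \leq\ 2(1-\varepsilon)\sum_i(R^\phi)_{ii}\lambda_i^2,
\end{equation*}
so the curvature term dominates $2\varepsilon\,(R^\phi)^{ij}H_{ij}$, which is pointwise nonnegative because (i) and (ii) force $\Ric^\phi_g\geq 0$. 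Integrating the static Bochner identity over $M$ then yields $(R^\phi)^{ij}H_{ij}\equiv 0$. Case (a) follows since $\Ric^\phi>0$ at $x_0$ together with $H(x_0)\geq 0$ forces $H(x_0)=0$, and $H$ is parallel; case (b) is then treated via contracted Bianchi to get $\mathrm{scal}(g)\cdot\tr_g H\equiv 0$. Without carrying the uniform $\varepsilon$-gap through the limit, the Ricci piece cannot be isolated and your rigidity step does not close.
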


As a simple consequence, in the case when $\phi$ is constant, then we have the following dichotomy: either $f$ is homotopically trivial; or $M$ and $N$ are both Ricci flat. This is because if $f$ is homotopically non-trivial where $g-f^*h$ is $2$-positive, then $\Ric_g\equiv 0$ thanks to $\Ric_g\geq \Ric_h\geq 0$ and thus $\Ric_h\equiv 0$. Given the nature of the method, we ask if the rigidity holds even on non-smooth metric space. 

\begin{que}
Let $(X^n,d_X,\mu_X)$ be a non-collapsed $\mathrm{RCD}(n-1,n)$ for some $n\in [1,+\infty)$. Suppose $f:(X,d_X)\to\mathbb{S}^n$ is a distance decreasing map to the standard sphere, is  $f$ homotopy trivial? 
\end{que}

{\it Acknowledgement:} The first named author was partially supported by Hong Kong RGC grant (Early Career Scheme) of Hong Kong No. 24304222, a NSFC grant and a direct grant of CUHK. The second named author would like to thank Professor Mu-Tao Wang for his encouragement and valuable discussions.

\section{Monotonicity along harmonic heat flow}\label{Sec: estimate}

In this section, we will prove some a-priori estimates of the (weighted) harmonic map heat flow. We start with recalling the definition of harmonic map heat flow. Suppose $f:M^n\to N^m$ is a smooth map, then $df$ is a section of $T^*M\otimes f^{-1}TN$ where $f^{-1}TN$ is the pull-back bundle induced by $f$ on $M$. We let $\nabla$ be the connection induced by metric $g$ on $M$ and $h$ on $N$. In this way, $\nabla df$ is called the second fundamental form which is a section in $T^*M\otimes T^*M\otimes f^{-1}TN$. The trace of $\tr_g \nabla df$ with respect to $g$ is called the tension field which is a vector field along $f(M)$. We will use $\Delta f$ to denote it\footnote{it is also sometimes denoted by $\tau(f)$.}. Given $(M,g)$ and $(N,h)$,  we consider the $\phi$-weighted harmonic map heat flow with initial data $f$ which is the evolution equation satisfying
\begin{equation}\label{eqn:HMHF}
    \left\{
    \begin{array}{ll}
        \partial_t F &=\Delta F- \langle dF, d\phi\rangle \\
         F(0)&=f 
    \end{array}
    \right.
\end{equation}
where $F:M\times [0,T]\to N$ is a one parameter family of smooth map from $M$ to $N$ and $\phi\in C^\infty(M)$. We will use $F_t$ and $F_i$ to denote $F_*(\partial_t)$ and $F_*(\partial_i)$ respectively.  We will call this a solution to the $\phi$-harmonic map heat flow.

We start with deriving an evolution equation of the eigenvalue of the pull-back metric $F^*h$. In what follows, we will use $\tilde R$ to denote the curvature of $h$ at $F(x)$. All norms and connections are with respect to the fixed metrics $g$ on $M$ and $h$ on $N$. To ease the notations, we will also denote $\ell=\min\{m,n\}$. 
\bigskip 

\begin{lma}\label{lma:evo-F}
Along the $\phi$-harmonic map heat flow between $(M^n,g) $ and $(N^m,h)$, the tensor $\a=g-F^*h$ satisfies 
    \begin{equation}\label{heatalpha}
\heat  \a_{ij}=(R^\phi)_i^l H_{lj} +(R^\phi)_j^l H_{il} -2g^{kl} (F^*\tilde R)_{kijl}+2g^{kl}F^\a_{ik} F_{jl}^\b h_{\a\b}-g^{kl}\phi_k \nabla_l \a_{ij}.
\end{equation}
where $H=F^*h$. In particular, 
\begin{equation} \label{heatgradF}
    \heat |d F|^2=-2|\nabla dF|^2 -2(R^\phi)^{ij}F_i^\a F_j^\b h_{\a\b} +2g^{kl}g^{ij} (F^*\mathrm{Rm}_h)_{iklj}+g^{kl}\phi_k \nabla_l |dF|^2.
\end{equation}
Here $\Delta$ denotes the Laplacian with respect to $g$ and $(R^\phi)_{ij}$ are local components of $\Ric^\phi=\Ric+\nabla^2\phi$, i.e. $(R^\phi)_{ij}=R_{ij}+\nabla_i\nabla_j\phi$, where we raise or lower its indices via metric components of $g$.
\end{lma}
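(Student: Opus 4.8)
The plan is to compute the evolution of $\a = g - F^*h$ directly from the harmonic map heat flow \eqref{eqn:HMHF}. Since $g$ is fixed, it suffices to differentiate $H := F^*h$ in time and to compute its rough Laplacian, then combine. I would write $H_{ij} = h_{\a\b}\, F_i^\a F_j^\b$ in local coordinates, where Greek indices refer to a local frame on $N$ pulled back via $F$, and Latin indices to $M$. Differentiating in $t$ and commuting $\partial_t$ with $\partial_i$ (i.e., using $\partial_t F_i^\a = \nabla_i (\partial_t F)^\a$ up to Christoffel terms from $h$), one gets
\begin{equation*}
\partial_t H_{ij} = \la \nabla_i (\partial_t F), F_j \ra + \la F_i, \nabla_j(\partial_t F)\ra,
\end{equation*}
and then substitute $\partial_t F = \Delta F - \la dF, d\phi\ra = \tr_g \nabla dF - g^{kl}\phi_k F_l$. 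The first main step is thus to expand $\nabla_i\big(\tr_g\nabla dF - g^{kl}\phi_k F_l\big)$ and pair it with $F_j$ (and symmetrize).

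The second main step is to compute $\Delta H_{ij} = g^{kl}\nabla_k\nabla_l H_{ij}$ using the Bochner-type identity: $\nabla_k\nabla_l(h_{\a\b}F_i^\a F_j^\b)$ produces a term $(\Delta (dF))$ paired with $dF$, a term $|\nabla dF|^2$-type contraction $g^{kl}(\nabla_k dF)_i \cdot (\nabla_l dF)_j$ — this is the $2 g^{kl}F^\a_{ik}F^\b_{jl}h_{\a\b}$ term — and cross terms. To handle the $\Delta(dF)$ piece I would invoke the standard commutation formula (Weitzenböck for harmonic-map-type operators): commuting the trace-Laplacian past the index $i$ in $\nabla dF$ generates a $\Ric_g$ term contracted into $dF$ and a pulled-back curvature term $(F^*\tilde R)$. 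Subtracting $\heat H_{ij}$ from $\heat g_{ij} = 0$, the $\tr_g\nabla dF$ contributions from $\partial_t H$ and from $\Delta H$ cancel against each other, the $\Ric_g$ terms combine with the Hessian-of-$\phi$ terms (which come out of differentiating the $\phi_k F_l$ piece and recombining) to produce exactly $\Ric^\phi$ contracted with $H$, i.e. the terms $(R^\phi)_i^l H_{lj} + (R^\phi)_j^l H_{il}$; the curvature-of-$N$ terms assemble into $-2g^{kl}(F^*\tilde R)_{kijl}$; the drift term $-g^{kl}\phi_k\nabla_l\a_{ij}$ appears from the first-order part; and the leftover is the stated $2g^{kl}F^\a_{ik}F^\b_{jl}h_{\a\b}$. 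This gives \eqref{heatalpha}. Then \eqref{heatgradF} follows by tracing \eqref{heatalpha} with $g^{ij}$: $g^{ij}H_{ij} = |dF|^2$, $g^{ij}(R^\phi)_i^l H_{lj} = (R^\phi)^{ij}F_i^\a F_j^\b h_{\a\b}$, the last term traces to $2|\nabla dF|^2$, and the sign/index bookkeeping on the curvature term gives $+2g^{kl}g^{ij}(F^*\mathrm{Rm}_h)_{iklj}$ (noting $\heat|dF|^2 = -\heat\,\tr_g\a$, which flips all signs relative to \eqref{heatalpha}).

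The main obstacle I expect is purely the bookkeeping of commutators and curvature conventions: keeping track of whether a given curvature term lands with the curvature of $g$ versus $h$, getting the sign and index ordering on $(F^*\tilde R)_{kijl}$ right, and making sure the $\phi$-Hessian terms assemble cleanly into $\Ric^\phi$ rather than leaving a stray $\nabla^2\phi$ contracted the wrong way. A clean way to organize this is to first record the commutation identity $g^{kl}\nabla_k\nabla_l (dF)_i^\a = \nabla_i(\tr_g\nabla dF)^\a + R_i{}^k (dF)_k^\a - g^{kl}(F^*\tilde R)(F_k, F_i)F_l{}^\a$ as a standalone computation (this is the Eells–Sampson Weitzenböck formula adapted to the drift term), and then the rest is linear algebra. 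There is no analytic difficulty here — existence and smoothness of $F$ on $[0,T]$ are assumed — so the proof is a direct, if slightly lengthy, tensor calculation, and I would present the key commutation step in detail and leave the final recombination to the reader.
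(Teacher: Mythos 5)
Your proposal is correct and follows essentially the same route as the paper: compute $\partial_t H_{ij}$ and $\Delta H_{ij}$ for $H=F^*h$, use the Eells--Sampson Weitzenb\"ock commutation formula to reconcile the $\Delta(dF)$ and $\nabla(\Delta F)$ terms (yielding the $\Ric_g$ and $F^*\tilde R$ contributions), observe that the $\phi$-Hessian pieces recombine with $\Ric_g$ into $\Ric^\phi$, and subtract; tracing then gives the energy-density identity. The paper simply bundles the Weitzenb\"ock step into the evolution equation for $F_i^\a$ (its equation for $\partial_t F_i^\a$) before forming $\partial_t H_{ij}$, rather than recording the commutation identity separately as you propose, but this is only an organizational difference.
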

\begin{proof}  
Let $H=F^*(h)$ so that $H_{ij}(x)=F^\a_i F^\b_j h_{\a\b}(F(x))$ for $x\in M$.  We denote $dF:TM\rightarrow TN $'s components by $F^\a_i$. By Ricci identity, we have
\begin{equation}\label{heatdF}
    \frac{\partial}{\partial t} F^\a_i=\Delta F^\a_i-R^l_i F_l^\alpha+\tilde R^\alpha_{\e\gamma\delta}F^\e_i F_k^\gamma F_l^\delta g^{kl}-g^{kl}F^\a_l \nabla_i\nabla_k \phi -g^{kl} (F^\a_l)_{|i}\partial_k \phi 
\end{equation} 

Therefore,   $H_{ij}=h_{\a\b}F_i^\a F_j^\b$ satisfies 
\begin{equation}
\begin{split}
\frac{\partial }{\partial t} H_{ij}=&h_{\a\b}\left(\frac{\partial }{\partial t}  F_i^\a\right) F_j^\b+h_{\a\b}F_i^\a\left(\frac{\partial }{\partial t}   F_j^\b\right)\\
=&h_{\a\b}\left(\Delta F^\a_i-R^l_i F_l^\alpha+\tilde R^\alpha_{\e\gamma\delta}F^\e_i F_k^\gamma F_l^\delta g^{kl}-g^{kl}F^\a_l \nabla_i\nabla_k \phi -g^{kl} (F^\a_l)_{|i}\partial_k \phi \right) F_j^\b\\
&+h_{\a\b} F_i^\a\left(\Delta F^\b_j-R^l_j F_l^\b+\tilde R^\b_{\e\gamma\delta}F^\e_j F_k^\gamma F_l^\delta g^{kl}-g^{kl}F^\b_l \nabla_j\nabla_k \phi -g^{kl} (F^\b_l)_{|j}\partial_k \phi \right)\\
=&h_{\a\b}\left(\Delta F^\a_i\, \,F^\b_j+F^\a_i\, \Delta F^\b_j  \right)-(R^l_i+\nabla^l\nabla_i \phi ) H_{lj}-(R^l_j+\nabla^l\nabla_j \phi  )H_{li}\\
&-2g^{kl}\tilde R(F_i,F_k,F_l,F_j )-h_{\a\b}g^{kl}\partial_k\phi \left((F^\a_l)_{|i}F^\b_j +F^\a_i(F^\b_l)_{|j} \right) ,
\end{split}
\end{equation}
while
\begin{equation}
    \begin{split}
        \Delta H_{ij}=&g^{kl}\nabla_k \nabla_l (h_{\a\b}F_i^\a F_j^\b)\\
=&g^{kl}h_{\a\b}\nabla_k (\nabla_l F_i^\a\, F_j^\b +   F_i^\a \nabla_l F_j^\b)\\
=&h_{\a\b}\left(\Delta F^\a_i\,F^\b_j+F^\a_i\Delta F^\b_j  \right)+ 2g^{kl}h_{\a\b}\nabla_k F^\a_i \nabla_l F^\b_j.
    \end{split}
\end{equation}

Hence a subtraction gives us
    \begin{equation}
\begin{split}
\heat H_{ij}=-&\bigg[ (R^\phi)_i^l H_{lj}+(R^{\phi})_j^lH_{il}+
2g^{kl}\wt R(F_l, F_i, F_k, F_j)
\\
 & +2g^{pq} (F^\a_i)_{|p} (F^\b_j)_{|q} h_{\a\b}+g^{kl}\partial_k\phi \nabla_l H_{ij}   \bigg],
\end{split}
\end{equation}
where $\Delta$ is the Laplacian on $(M,g)$ and $(R^\phi)^i_j$ is a short hand of $R^i_j+\nabla^i\nabla_j\phi$. Since $g$ is a fixed metric on $M$, we immediately get (\ref{heatalpha}) for $\alpha$.

The second equation follows from taking trace of the first since $|d F|^2=\tr_g H=g^{ij} H_{ij}$.
\end{proof}

{\color{red}

}

At $x\in M$, we let $\lambda_i$ be the singular value of $dF$ so that the eigenvalues of $F^*h$ with respect to $g$ are $\{\lambda_i^2\}_{i=1}^n$ where $\lambda_i=0$ for $i>\ell=\min\{m,n\}$. 

\begin{prop}\label{prop:tensorMP}
Suppose $(M^n,g)$ and $(N^m,h)$ are two compact manifolds such that $\ell=\min\{n,m\}\geq 2$ and 
\begin{enumerate}
    \item[(a)] $\mathrm{sec}(h)\geq 0$;
    \item[(b)] there exists $\phi\in C^\infty(M)$ such that  for all unit vector $u\in TM$ and $v\in TN$, 
    \begin{equation}
        \Ric^\phi_g(u,u)\geq \Ric_h(v,v).
    \end{equation}
\end{enumerate}
If $F:M\times [0,T]\to N$ is a solution to the $\phi$-harmonic map heat flow such that the initially $(1-\e)^2g-f^*h$ is $2$-nonnegative for some $1>\e\geq 0$, then $(1-\e)^2g-F^*h$ remains $2$-nonnegative for all $t\in [0,T]$. 
\end{prop}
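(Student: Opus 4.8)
The plan is to apply a tensor maximum principle (in the spirit of Hamilton) to the symmetric $2$-tensor $\beta := (1-\e)^2 g - F^*h$, whose $2$-nonnegativity is the content of the conclusion. Recall that $\beta$ is $2$-nonnegative at a point precisely when $\mu_i + \mu_j \geq 0$ for any two distinct eigenvalues $\mu_i,\mu_j$ of $\beta$; equivalently, writing $\lambda_1^2 \geq \lambda_2^2 \geq \cdots$ for the eigenvalues of $F^*h$ with respect to $g$, the condition is $\lambda_1^2 + \lambda_2^2 \leq 2(1-\e)^2$, i.e. the sum of the two largest eigenvalues of $F^*h$ is controlled. So I would first record that from Lemma~\ref{lma:evo-F} the tensor $H = F^*h$ satisfies
\[
\heat H_{ij} = -(R^\phi)_i^l H_{lj} - (R^\phi)_j^l H_{il} - 2g^{kl}(F^*\tilde R)_{lik j} - 2 g^{pq} (F^\a_i)_{|p}(F^\b_j)_{|q} h_{\a\b} - g^{kl}\phi_k \nabla_l H_{ij},
\]
and hence $\beta = (1-\e)^2 g - H$ satisfies the same equation with the sign of every term on the right reversed (the $(1-\e)^2 g$ piece is parallel and time-independent, so it contributes nothing), up to the transport term $-g^{kl}\phi_k\nabla_l\beta_{ij}$ which is harmless for the maximum principle.

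The core of the argument is Hamilton's maximum principle for systems: at a first time $t_0$ and point $x_0$ where $2$-nonnegativity of $\beta$ first fails, one has an eigenvector-pair null direction, and one must show that the reaction terms, evaluated on the relevant $2$-plane after throwing away the nonnegative gradient/Hessian contributions allowed by the maximum principle, keep $\mu_1 + \mu_2$ from decreasing through $0$. Concretely, diagonalize $F^*h$ at $(x_0,t_0)$ so $\lambda_1^2 + \lambda_2^2 = 2(1-\e)^2$ and all other $\lambda_k^2 \leq 2(1-\e)^2 - \lambda_{\max(1,2)}^2$; I then need to show that $\frac{d}{dt}(\lambda_1^2 + \lambda_2^2) \leq 0$ under the flow, modulo the usual Hamilton corrections. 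The terms to control are: (i) the weighted-Ricci terms $(R^\phi)_i^l H_{lj} + (R^\phi)_j^l H_{il}$, which on the $(e_1,e_2)$-block contribute roughly $2(R^\phi)_{11}\lambda_1^2 + 2(R^\phi)_{22}\lambda_2^2$ and are bounded using hypothesis (b): comparing $\Ric^\phi_g$ on $M$ against $\Ric_h$ pulled back through $dF$ gives the right sign once one uses that $F^*h$ is a multiple of $g$ in suitable directions at a null-eigenvector configuration; (ii) the ambient curvature term $2 g^{kl}(F^*\tilde R)_{lik j}$, which by hypothesis (a) ($\mathrm{sec}(h)\geq 0$) contributes with a favorable sign when traced against the $(e_1,e_2)$-plane — this is exactly the Tsui–Wang-type computation where $\mathrm{sec}(h)\geq 0$ makes $\sum \tilde R(dF e_i, dF e_j, dF e_j, dF e_i)$ work in our favor; and (iii) the term $2 g^{pq}(F^\a_i)_{|p}(F^\b_j)_{|q}h_{\a\b}$, which is manifestly nonnegative as a quadratic form and therefore has the correct sign for $\beta$ (it appears with a good sign since $\beta = (1-\e)^2 g - H$).

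The main obstacle, as in \cite{TsuiWang2003,TsaiTsuiWang2023}, will be step (i)–(ii) combined: showing that the curvature reaction terms, after restricting to the $2$-dimensional null subspace and incorporating Hamilton's allowed Hessian/gradient modifications, genuinely keep $\lambda_1^2+\lambda_2^2$ from exceeding $2(1-\e)^2$. The delicate point is that the weighted-Ricci comparison in (b) is a scalar (Ricci-to-Ricci) inequality, whereas what enters the evolution of the top two eigenvalues is a more refined combination; one must carefully exploit that at the critical eigenvalue configuration the directions $e_1,e_2$ and their images under $dF$ can be chosen so that the Ricci terms assemble into exactly the quantity bounded by (b), and that the $\mathrm{sec}(h)\geq 0$ term compensates any leftover. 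I would also need to verify the standard technical preamble for the tensor maximum principle: that $\beta$ evolves by a (weakly) parabolic equation with a locally Lipschitz reaction field preserving the convex cone of $2$-nonnegative symmetric tensors — here one invokes that $2$-nonnegativity defines a closed convex $O(n)$-invariant cone, so Hamilton's theorem applies once the reaction vector field is shown to point into the cone on its boundary, which is precisely what the eigenvalue computation above establishes.
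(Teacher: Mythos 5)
Your outline identifies the correct framework (Hamilton's tensor maximum principle, null vector condition on the symmetric tensor $\beta=(1-\e)^2g-F^*h$, the splitting of the reaction terms into a weighted-Ricci block, a $\mathrm{sec}(h)\geq 0$ block, and a manifestly favorable Hessian block), and this is the same route the paper takes. But the proposal stops exactly where the proof starts: you flag the null-vector estimate as ``the delicate point'' and ``the main obstacle'' without actually carrying it out, so what you have is a strategy, not a proof.

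Concretely, what is missing is the following. At the null configuration diagonalize $H=F^*h$ with eigenvalues $\lambda_1^2\geq\lambda_2^2\geq\cdots\geq\lambda_n^2$ and $\lambda_1^2+\lambda_2^2=2$ (after rescaling to $\e=0$). After dropping the nonnegative Hessian term, the reaction on the null $2$-plane is
\[
\heat(\a_{11}+\a_{22})\Big|_{(x_0,t_0)} \geq 2\sum_{i=1}^2 (R^\phi)_{ii}\lambda_i^2 - 2\sum_{i=1}^2\sum_{k=1}^n \tilde R_{ikki}\lambda_i^2\lambda_k^2 .
\]
One must then use hypothesis (b) in the form $(R^\phi)_{ii}\geq\tilde R_{ii}=\sum_{k=1}^m\tilde R_{ikki}$, regroup to get
$2\sum_{i=1}^2\lambda_i^2\sum_{k=1}^m(1-\lambda_k^2)\tilde R_{ikki}$
(this requires a case distinction on $n$ versus $m$, using $\mathrm{sec}(h)\geq0$ to absorb the extra $\lambda_k$'s when $n\leq m$, and padding with zeros when $n>m$), observe that $\lambda_k^2\leq\lambda_2^2\leq 1$ for all $k\geq 2$ (which is where the $2$-nonnegativity structure enters — not that ``$F^*h$ is a multiple of $g$ in suitable directions,'' which is not true at a generic null configuration since $\lambda_1^2$ and $\lambda_2^2$ can differ), discard the nonnegative $k\geq 3$ terms, and reduce to
\[
\lambda_1^2(1-\lambda_2^2)\tilde R_{1221}+\lambda_2^2(1-\lambda_1^2)\tilde R_{1221} = \tfrac12(\lambda_1^2-\lambda_2^2)^2\tilde R_{1221}\geq 0 ,
\]
where the last identity uses $\lambda_1^2+\lambda_2^2=2$. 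None of this algebraic work appears in your proposal; without it there is no verification that the reaction vector field points into the $2$-nonnegative cone.

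A smaller omission: for $\e>0$ the hypothesis (b) is stated for $g$ and $h$, not for $g$ and $(1-\e)^{-2}h$; one needs the observation that replacing $h$ by $\tilde h=(1-\e)^{-2}h$ only weakens the right-hand side of (b) (since $\Ric_{\tilde h}(v,v)=(1-\e)^2\Ric_h(\tilde v,\tilde v)\leq\Ric_h(\tilde v,\tilde v)$ for the corresponding unit vectors, using $\mathrm{sec}(h)\geq 0$), reducing everything to the $\e=0$ case. You work with $(1-\e)^2 g-H$ directly but never address how (b) interacts with the rescaling.
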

\begin{proof}
   Suppose $\e>0$, we define $\tilde h=(1-\e)^{-2}h$ 
   so that $ F$ is still a $\phi$-harmonic map heat flow with respect to $g$ and $\tilde h$.  If $u\in TM$ and $v\in TN$ are such that $g(u,u)=1=\tilde h(v,v)$.  Then $\tilde v=(1-\e)^{-1}v$ satisfies $h(\tilde v,\tilde v)=1$ and hence
\begin{equation}
      \Ric_g^\phi ( u, u)\geq \Ric_{ h}(\tilde v,\tilde v)\geq \Ric_{\tilde h}(v,v)\geq 0.
\end{equation}
Hence,  it is sufficient to consider the case of $\e=0$. 

By tensor maximum principle (for instances see \cite[Theorem A.21]{ChowBook}), it suffices to verify the null vector condition. That is to show that if $\a$ is $2$-nonegative and $\a(e_1,e_1)+\a(e_2,e_2)=0$ at $(x_0,t_0)$, then $\heat (\a_{11}+\a_{22})\geq 0$ at $(x_0,t_0)$.  At $(x_0,t_0)$, we let $\{\lambda_i\}_{i=1}^n$ be the singular values of $dF$ so that $H_{ij}$ is diagonalized with eigenvalues $\{\lambda_i^2\}_{i=1}^n$ with order $\lambda_1^2\geq \lambda_2^2\geq ...\geq \lambda_n^2$. Then the eigenvalues of $\a$ with respect to the $2$-nonnegative cone is realized by $e_1$ and $e_2$, i.e. $\a_{11}+\a_{22}=2-\lambda_1^2-\lambda_2^2$. 

By (\ref{heatalpha}), we can compute the evolution of $\a_{11}+\a_{22}$, evaluating at $(x_0,t_0)$ 
\begin{equation}\label{null-vec}
\begin{split}
\heat (\a_{11}+\a_{22})\Big|_{(x_0,t_0)}
=&2\bigg[ (R^\phi)_1^lH_{l1}-
g^{kl} (F^*\tilde R)_{k11l}+g^{kl}F^\a_{1k} F_{1l}^\b h_{\a\b}\bigg]\\
&+2\bigg[ (R^\phi)_2^lH_{l2}-
g^{kl} (F^*\tilde R)_{k22l}+g^{kl}F^\a_{2
k} F_{2l}^\b h_{\a\b}\bigg]\\
\geq& 2\sum_{i=1}^2 (R^\phi)_{ii}\lambda_i^2-2\sum_{i=1}^2 \sum_{k=1}^n \tilde R_{ikki}\lambda_i^2\lambda_k^2\\
\geq& 2\sum_{i=1}^2 \tilde R_{ii}\lambda_i^2-2\sum_{i=1}^2 \sum_{k=1}^n \tilde R_{ikki}\lambda_i^2\lambda_k^2\\
=& 2\sum_{i=1}^2 \lambda_i^2 \left( \sum_{k=1}^m  \tilde R_{ikki} - \sum_{k=1}^\ell  \lambda_k^2\tilde R_{ikki} \right)
\end{split}
\end{equation}
where we have used $\lambda_i=0$ for $i>\ell$, $R_{ii}^\phi\geq \tilde R_{ii}$ and $\nabla_X (\a_{11}+\a_{22})=0$ at $(x_0,t_0)$ for all $X\in T_{x_0}M$.

We now estimate the right hand side from below. Since $\mathrm{sec}(h)\geq 0$ and $\ell\leq m$,
\begin{equation}
    \begin{split}
  \sum_{k=1}^m  \tilde R_{ikki} - \sum_{k=1}^\ell  \lambda_k^2\tilde R_{ikki}
       &\geq  \sum_{k=1}^\ell  \tilde R_{ikki} - \sum_{k=1}^\ell  \lambda_k^2\tilde R_{ikki}\\
       &=\sum_{k=1}^\ell (1-\lambda_k^2) \tilde R_{ikki}.
    \end{split}
\end{equation}

Using also $|\lambda_i|\leq 1$ for each $i>1$,


\begin{equation}
    \begin{split}
        \sum_{i=1}^2 \lambda_i^2 \left( \sum_{k=1}^\ell  (1- \lambda_k^2)\tilde R_{ikki}\right)&\geq  \lambda_1^2  (1- \lambda_2^2)\tilde R_{1221} +\lambda_2^2 (1- \lambda_1^2)\tilde R_{1221}\\
        &= \frac12 \left( \lambda_1^2-\lambda_2^2\right)^2\tilde R_{1221} \geq 0
    \end{split}
\end{equation}
where we have used $\lambda_1^2+\lambda_2^2=2$. Combining this with \eqref{null-vec}, this verifies the null-vector condition. This completes the proof.
\end{proof}

We now consider the energy density along the flow under the curvature assumptions. Under a slightly weaker contractive property, we can prove a monotonicity on the total energy density. 
\begin{prop}\label{prop:C1-after2non}
Suppose $(M^n,g)$ and $(N^m,h)$ are compact manifolds satifying the assumptions in Proposition~\ref{prop:tensorMP}. If $F:M\times [0,T]\to N$ is a solution to the $\phi$-harmonic map heat flow which remains area non-increasing, i.e. $||\Lambda^2 dF||_\infty\leq 1$, for all $t\in [0,T]$, then on $M\times [0,T]$,
\begin{equation}\label{heatdF^2}
    \heat |dF|^2 \leq -2|\nabla d F|^2+\langle \nabla \phi,\nabla |dF|^2 \rangle.
\end{equation}
\end{prop}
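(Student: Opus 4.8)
The statement is pointwise in $(x,t)\in M\times[0,T]$, so the plan is to start from the evolution identity \eqref{heatgradF} of Lemma~\ref{lma:evo-F} and show that, under $2$-nonnegativity of $g-F^*h$ together with the hypotheses (a)--(b) of Proposition~\ref{prop:tensorMP}, the two curvature terms on its right-hand side combine into a quantity which is $\le 0$; since the remaining terms are exactly $-2|\nabla dF|^2+g^{kl}\phi_k\nabla_l|dF|^2=-2|\nabla dF|^2+\langle\nabla\phi,\nabla|dF|^2\rangle$, the asserted inequality follows at once. Concretely, I would fix a point and choose a $g$-orthonormal frame $\{e_i\}_{i=1}^n$ diagonalizing $H=F^*h$ with eigenvalues $\lambda_1^2\ge\cdots\ge\lambda_n^2$; for each $i$ with $\lambda_i\neq 0$ the vector $\hat e_i:=\lambda_i^{-1}dF(e_i)$ is an $h$-unit vector in $T_{F(x)}N$, and $\{\hat e_i\}$ over such $i$ is orthonormal since $H$ is diagonal, so I complete it to an $h$-orthonormal basis of $T_{F(x)}N$. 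Exactly as in the proof of Proposition~\ref{prop:tensorMP}, the two curvature terms of \eqref{heatgradF} then become $-2\sum_i(R^\phi)_{ii}\lambda_i^2+2\sum_{i,k}\lambda_i^2\lambda_k^2\,\tilde R_{ikki}$, where $(R^\phi)_{ii}=\Ric^\phi_g(e_i,e_i)$ and $\tilde R_{ikki}=\mathrm{Rm}_h(\hat e_i,\hat e_k,\hat e_k,\hat e_i)\ge 0$ by (a); here and below all sums run over the indices with nonvanishing singular value (the others contribute nothing). Thus it suffices to prove
\[
\sum_i\lambda_i^2\Big((R^\phi)_{ii}-\sum_k\lambda_k^2\,\tilde R_{ikki}\Big)\ \ge\ 0 .
\]

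Next I would feed in hypothesis (b) with the unit vectors $u=e_i$ and $v=\hat e_i$: $(R^\phi)_{ii}\ge\Ric_h(\hat e_i,\hat e_i)=\sum_a\tilde R_{iaai}\ge\sum_k\tilde R_{ikki}$, the last step because $\mathrm{sec}(h)\ge 0$ forces every $\tilde R_{iaai}\ge 0$, so discarding the target directions outside the image of $dF$ only decreases the sum. Substituting reduces the target to $\sum_{i,k}\lambda_i^2(1-\lambda_k^2)\,\tilde R_{ikki}\ge 0$. I would then group the ordered pairs $(i,k)$: the diagonal terms $i=k$ vanish since $\tilde R_{iiii}=0$, and for $i\neq k$ the terms $(i,k)$ and $(k,i)$ add up, using $\tilde R_{ikki}=\tilde R_{kiik}$, to $\big(\lambda_i^2+\lambda_k^2-2\lambda_i^2\lambda_k^2\big)\tilde R_{ikki}$. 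Since $g-F^*h$ is $2$-nonnegative, $s:=\lambda_i^2+\lambda_k^2\le 2$ for distinct $i,k$, and with $p:=\lambda_i^2\lambda_k^2\le s^2/4$ one gets $\lambda_i^2+\lambda_k^2-2\lambda_i^2\lambda_k^2=s-2p\ge s-\tfrac12 s^2=\tfrac12 s(2-s)\ge 0$; together with $\tilde R_{ikki}\ge 0$ this makes every pair contribution non-negative, which proves the claim.

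I do not expect a genuine obstacle: the whole computation is the traced analogue of the null-vector computation already carried out in Proposition~\ref{prop:tensorMP}, the only difference being that all pairs $(i,k)$ now appear rather than just $(1,2)$. The one place where the full strength of $2$-nonnegativity is really needed — rather than a pointwise bound $\lambda_i\le 1$, which fails for $\lambda_1$ — is the elementary inequality $\lambda_i^2+\lambda_k^2-2\lambda_i^2\lambda_k^2\ge 0$ on the slab $\lambda_i^2+\lambda_k^2\le 2$, so I would be careful to do the pairing (symmetrization) step and use the vanishing of the diagonal terms, instead of trying to bound the sum term by term. The only remaining bookkeeping is the case $n\neq m$, but restricting every sum to the nonzero singular values and completing $\{\hat e_i\}$ to an $h$-orthonormal basis of $T_{F(x)}N$ handles $n<m$ and $n>m$ uniformly.
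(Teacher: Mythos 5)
Your proposal is correct and takes essentially the same route as the paper: start from \eqref{heatgradF}, feed in hypothesis~(b) to replace $(R^\phi)_{ii}$ by $\tilde R_{ii}$, and use $2$-nonnegativity together with $\mathrm{sec}(h)\ge 0$ to control the quadratic curvature term. The only cosmetic differences are that you symmetrize in $(i,k)$ and verify $\lambda_i^2+\lambda_k^2-2\lambda_i^2\lambda_k^2\ge 0$ directly via $s-2p\ge \tfrac12 s(2-s)$, and you avoid the $n\le m$ versus $n>m$ case split of \eqref{n<m}--\eqref{n>m} by working only with the nonzero singular values, whereas the paper passes through $\lambda_i^2\lambda_k^2\le|\lambda_i\lambda_k|\le\tfrac12(\lambda_i^2+\lambda_k^2)$; these are equivalent computations.
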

\begin{proof}
Our assumption in term of the singular values says that  $|\lambda_i\lambda_j|\leq 1$ for all $i\neq j$.  Using \eqref{heatgradF}, $\mathrm{sec}(h)\geq 0$ and the area non-increasing property, we compute 
\begin{equation}\label{ineq-Bochner-1}
\begin{split}
    \heat |d F|^2
    =&-2|\nabla d F|^2 -2 \sum_{i=1}^n (R^\phi)_{ii}\lambda_i^2+2\sum_{i,k=1}^n \tilde R_{ikki}\lambda_i^2\lambda_k^2+\langle \nabla \phi,\nabla |dF|^2 \rangle \\
    \leq &-2|\nabla d F|^2  -2\sum_{i=1}^\ell \tilde R_{ii}\lambda_i^2+2\sum_{i=1}^\ell\sum_{k=1}^\ell \tilde R_{ikki}|\lambda_i\lambda_k|+\langle \nabla \phi,\nabla |dF|^2 \rangle\\
    \leq& -2|\nabla d F|^2  -2\sum_{i=1}^\ell \tilde R_{ii}\lambda_i^2+\sum_{i=1}^\ell\sum_{k=1}^\ell (\lambda_i^2+\lambda_k^2)\tilde R_{ikki}+\langle \nabla \phi,\nabla |dF|^2 \rangle\\
    =&-2|\nabla d F|^2  -2\sum_{i=1}^\ell \tilde R_{ii}\lambda_i^2+2\sum_{i=1}^\ell\sum_{k=1}^\ell \lambda_i^2\tilde R_{ikki}+\langle \nabla \phi,\nabla |dF|^2 \rangle.
\end{split}
\end{equation}

Since $\mathrm{sec}(h)\geq 0$ and $m\geq \ell$,  the third term can be estimated as 
\begin{equation}\label{ineq-Bochner-2}
 \begin{split}
    \sum_{i=1}^\ell   \sum_{k=1}^\ell  \lambda_i^2 \tilde R_{ikki}&\leq     \sum_{i=1}^\ell    \sum_{k=1}^m  \lambda_i^2 \tilde R_{ikki}=  \sum_{i=1}^\ell  \lambda_i^2 \tilde R_{ii}.
    \end{split}
\end{equation}

From this, the sum of the second and the third term is non-positive, this completes the proof. 
\end{proof}

We end this section by recalling some well-known results of the harmonic map heat flow.

\begin{thm}\label{thm:standard-existence}
Suppose $(M,g)$ and $(N,h)$ are two smooth compact manifolds and $\phi\in C^\infty(M)$. If $f:M\to N$ is a smooth map between $M$ and $N$, then there exists a short-time solution to the $\phi$-harmonic map heat flow $F:M\times [0,T]\to N$.
\end{thm}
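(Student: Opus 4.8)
The plan is to reduce \eqref{eqn:HMHF} to a semilinear parabolic system on a fixed Euclidean target and then invoke standard parabolic existence theory. By the Nash isometric embedding theorem, realize $(N^m,h)$ as a submanifold of some $\mathbb R^K$. A smooth map $F:M\to N$ is then the same as a smooth $\mathbb R^K$-valued map with image in $N$, and a direct computation shows that \eqref{eqn:HMHF} is equivalent to the ambient system
\begin{equation*}
\partial_t F^a=\Delta_g F^a-g^{ij}\,\partial_i\phi\,\partial_j F^a- g^{ij}\,A^a(F)\big(\partial_i F,\partial_j F\big),\qquad a=1,\dots,K,
\end{equation*}
where $A$ is the second fundamental form of $N\hookrightarrow\mathbb R^K$, extended to a smooth tensor field on a tubular neighbourhood of $N$, and $\Delta_g$ is the Laplace--Beltrami operator of the fixed metric $g$. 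Since $M$ is compact, $\Delta_g$ is uniformly elliptic with smooth coefficients; the nonlinearity is smooth and quadratic in $\nabla F$, and the drift term $-g^{ij}\partial_i\phi\,\partial_j F$ is lower order with smooth coefficients, so it causes no additional difficulty.

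I would then solve this system by a contraction mapping argument in the parabolic H\"older space $C^{2+\alpha,\,1+\alpha/2}(M\times[0,T];\mathbb R^K)$: freezing the quadratic and drift terms and applying Schauder estimates for the linear operator $\partial_t-\Delta_g$ on the compact manifold $M$ yields a fixed point for $T$ small depending only on $\|f\|_{C^{2+\alpha}}$ and the background geometry. Parabolic bootstrapping upgrades the solution to $C^\infty$ on $M\times(0,T]$, and up to $t=0$ as well since $f$ is smooth; the same estimate gives uniqueness. Alternatively one may simply quote Eells--Sampson \cite{EellsSampson1964} for the case $\phi\equiv0$ together with the observation that the $\phi$-flow is the harmonic map heat flow for the Dirichlet energy computed with respect to the weighted volume $e^{-\phi}\,dV_g$, whose drift Laplacian $\Delta_\phi=\Delta-\nabla\phi\cdot\nabla$ is again uniformly elliptic with smooth coefficients, so the classical proof applies verbatim.

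Finally I would verify that the $\mathbb R^K$-valued solution actually takes values in $N$. Let $\rho$ be the square of the distance to $N$, a smooth non-negative function near $N$ vanishing to second order exactly along $N$. Since $F(\cdot,0)=f$ maps into $N$, by continuity $F$ stays inside the tubular neighbourhood for a short time, and there one computes, using the ambient equation and the fact that $A(F)(\cdot,\cdot)$ is normal to $N$, that $u:=\rho\circ F$ satisfies a differential inequality of the form $\big(\partial_t-\Delta_g\big)u\le C\,u+C\,|\nabla F|\,|\nabla u|$ with $u(\cdot,0)\equiv0$; the parabolic maximum principle then forces $u\equiv0$, i.e.\ $F(M\times[0,T])\subset N$. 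This invariance is the only step that is not purely mechanical, and it is the one I would write out in detail.
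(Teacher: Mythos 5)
Your proof is correct and is essentially the standard argument that the paper simply cites (the contraction mapping scheme via Nash embedding in the extrinsic formulation, as in Lin--Wang, Chapter~5, plus the usual invariance-of-$N$ maximum principle step), with the observation that the lower-order drift $-\langle dF,d\phi\rangle$ poses no obstruction. The paper also sketches an alternative — composing a standard harmonic map heat flow with the flow $\Phi_t$ of $-\nabla\phi$ on $M$ — which you do not use, but your main route matches the paper's primary one.
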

\begin{proof} 
When $\phi$ is a constant, it follows from the celebrated work of Eells-Sampson \cite{EellsSampson1964}. Since $\phi$ is smoothly controlled and $M,N$ are both compact, the contraction mapping argument can be carried over directly. We refer readers to \cite[Chapter 5]{LiWangBook}. Alternatively, we can obtain a solution $F$ by compositing a standard harmonic map heat flow $\bar F$ with $\Phi_t:M\to M$ where \begin{equation}
    \left\{
    \begin{array}{ll}
         \partial_t \Phi_t=-\nabla \phi;  \\
         \Phi_0=\mathrm{Id}. 
    \end{array}
    \right.
\end{equation}
\end{proof}

The following regularity theory of harmonic map heat flow is standard. 
\begin{prop}\label{prop:standard-boots}
Suppose the $\phi$-harmonic map heat flow $F:M^n\times [0,T]\to N^m$ satisfies $$|dF|\leq \Lambda$$ on $M\times [0,T]$, then for all $k\in \mathbb{N}$, there exists $C(n,m,k,g,h,\phi),S(n,m,k,g,h,\phi)>0$ such that 
$$\sup_M |\nabla^k dF|^2\leq \frac{C(n,m,k,g,h,\phi)}{t^k}$$
on $M\times (0,T\wedge S]$. 
\end{prop}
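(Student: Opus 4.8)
The statement to prove is Proposition~\ref{prop:standard-boots}, the interior derivative estimates for the $\phi$-harmonic map heat flow assuming a uniform bound $|dF|\le\Lambda$.

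\medskip

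The plan is to argue by a standard Bernstein-type iteration, treating $\nabla^k dF$ inductively. First I would localize: since $M$ is compact there is nothing to cut off in space, so the only delicate point is the degeneration as $t\to 0$, which is handled by parabolic weights $t^k$. For the base case $k=0$ there is nothing to prove. For $k=1$, I would use Lemma~\ref{lma:evo-F}: the evolution $\heat|dF|^2=-2|\nabla dF|^2-2(R^\phi)^{ij}F_i^\a F_j^\b h_{\a\b}+2g^{kl}g^{ij}(F^*\mathrm{Rm}_h)_{iklj}+\langle\nabla\phi,\nabla|dF|^2\rangle$ together with a companion Bochner-type identity for $|\nabla dF|^2$. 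The schematic form of the latter is $\heat|\nabla dF|^2\le -2|\nabla^2 dF|^2+C(1+|dF|^2)|\nabla dF|^2+\langle\nabla\phi,\nabla|\nabla dF|^2\rangle$, where $C$ depends on $g,h,\phi$ and their covariant derivatives through the curvatures of $M$, $N$, and the Hessian of $\phi$ (one commutes derivatives past $\Delta$, picks up $\mathrm{Rm}_M$, $\mathrm{Rm}_N$, $\nabla\mathrm{Rm}_N$, and $\nabla^2\phi$, $\nabla^3\phi$ terms, each contributing bounded coefficients since everything is smooth on a compact manifold). Using $|dF|\le\Lambda$, this becomes $\heat|\nabla dF|^2\le -2|\nabla^2 dF|^2+C_1|\nabla dF|^2+\langle\nabla\phi,\nabla|\nabla dF|^2\rangle$ with $C_1=C_1(n,m,g,h,\phi,\Lambda)$. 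Then I would run the classical trick: set $G=t|\nabla dF|^2+A|dF|^2$ for a large constant $A$, compute $\heat G$, and absorb the bad term $C_1 t|\nabla dF|^2$ using the good term $-2A|\nabla dF|^2$ coming from $A\heat|dF|^2$ once $A\ge C_1(T\wedge S)$; the extra term $|\nabla dF|^2$ from $\partial_t(t|\nabla dF|^2)=|\nabla dF|^2+t\heat|\nabla dF|^2+\dots$ is likewise absorbed. The first-order drift term $\langle\nabla\phi,\nabla G\rangle$ is harmless for the maximum principle. At an interior space-time maximum of $G$ one concludes $G\le A\Lambda^2$, hence $|\nabla dF|^2\le A\Lambda^2/t$ on $M\times(0,T\wedge S]$, which is the claimed bound for $k=1$.

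\medskip

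For the inductive step, suppose the bounds $\sup_M|\nabla^j dF|^2\le C_j t^{-j}$ hold for all $j\le k$ on $M\times(0,T\wedge S]$. Differentiating the heat equation $k+1$ times and commuting covariant derivatives, $\nabla^{k+1}dF$ satisfies $\heat|\nabla^{k+1}dF|^2\le -2|\nabla^{k+2}dF|^2+\langle\nabla\phi,\nabla|\nabla^{k+1}dF|^2\rangle+(\text{lower order})$, where the lower-order terms are finite sums of contractions of $\nabla^{a}dF$'s with $a\le k+1$ and factors of the fixed tensors $\mathrm{Rm}_M,\mathrm{Rm}_N,\nabla^{\bullet}\mathrm{Rm}_N,\nabla^{\bullet}\phi$; by the inductive hypothesis every such term with all factors of order $\le k$ is bounded by $C t^{-(k+1)}|\nabla^{k+1}dF|^2$ plus terms of the form $C t^{-(k+2)}$ — more carefully one tracks that each term is $\le C t^{-s}$ with $s\le k+1$ times $|\nabla^{k+1}dF|^2$, or is at most $C t^{-(k+1)}|\nabla^{k+1}dF|$. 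Then I would consider $G_{k+1}=t^{k+1}|\nabla^{k+1}dF|^2+A_{k+1}\sum_{j\le k}t^j|\nabla^j dF|^2$ (or more simply iterate on $t^{k+1}|\nabla^{k+1}dF|^2 + A\, t^k|\nabla^k dF|^2$), compute $\heat G_{k+1}$, and absorb the bad terms using the good negative terms $-2A_{k+1}t^k|\nabla^k dF|^2$ and the explicit power of $t$ gained. A maximum principle argument, with the drift term again innocuous, yields $\sup_M t^{k+1}|\nabla^{k+1}dF|^2\le C_{k+1}$ on $M\times(0,T\wedge S]$, completing the induction. The constant $S$ enters only to keep the absorbed coefficients like $A_{k+1}(T\wedge S)^{\text{power}}$ under control.

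\medskip

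The main obstacle, and the only part requiring genuine care rather than bookkeeping, is the commutation/Bochner step: writing down correctly the evolution inequality for $|\nabla^{k+1}dF|^2$ with all curvature and $\phi$-derivative error terms, and verifying that, after applying the inductive hypothesis, every error is dominated by the product of the available negative term(s) with a power of $t$ that the weight $t^{k+1}$ compensates — i.e. that the scaling of each term is consistent with the parabolic rescaling $(x,t)\mapsto(x,\lambda^2 t)$. Once the correct schematic form of the evolution inequality is in hand, the maximum principle with weights is entirely routine; I would cite \cite{EellsSampson1964} and standard references (e.g. the treatment in \cite{LiWangBook}) for the precise form of these estimates, noting that the presence of $\phi$ only adds bounded smooth coefficients and a first-order drift $-\langle d\phi,\cdot\rangle$ which never affects a maximum principle argument.
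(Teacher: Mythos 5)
Your proposal is correct and takes essentially the same approach the paper relies on: the paper does not write out the argument but defers to standard parabolic theory and to the maximum-principle proof of \cite[Theorem 3.1]{LeeTam2022}, which is precisely the Bernstein/Shi-type iteration with parabolic weights $t^k$ that you sketch. The paper's only added comment is that the first-order drift coming from $\phi$ is harmless since $\phi$ is smooth on a compact manifold, a point you also make.
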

\begin{proof}
Since $(M,g)$ and $(N,h)$ are compact manifolds, it follows from standard parabolic theory. For a maximum principle proof, we refer readers to the proof of \cite[Theorem 3.1]{LeeTam2022} in case when $\phi$ is constant. Since $\phi$ is smoothly bounded, the proof can be modified accordingly thanks to the assumed gradient estimate. 
\end{proof}

\section{Longtime convergence and rigidity}


In this section, we will finish the proof of Theorem~\ref{intro-main} by studying the long-time behaviour of the harmonic map heat flow \eqref{eqn:HMHF}.

\begin{proof}[Proof of Theorem~\ref{intro-main}]
We assume that $f$ is not homotopically trivial.  By Theorem~\ref{thm:standard-existence}, it admits a short-time solution to \eqref{eqn:HMHF} with initial data $f:M^n\to N^m$. We let $T_{max}$ be the maximal existence time. 

We first claim that $T_{max}=+\infty$. It follows from Proposition~\ref{prop:tensorMP} that $g-F^*h$ remains $2$-nonnegative for all $t\in [0,T_{max})$. Hence, Proposition~\ref{prop:C1-after2non} implies that the energy density $|d F|^2$ satisfies
\begin{equation}
     |d F|^2 \leq \sup_M |d f|^2=C_0(n,m,f,g,h,\phi)
\end{equation}
on $M\times [0,T_{max})$ by maximum principle. In particular, Proposition~\ref{prop:standard-boots} implies that $F$ is bounded uniformly in $C^k_{loc}$ for all $k\in \mathbb{N}$ as $t\to T_{max}$. It follows that $T_{max}$ must be $+\infty$. Moreover, integrating the inequality (\ref{heatdF^2}) in Proposition~\ref{prop:C1-after2non} implies that for all $t>0$,
\begin{equation}\label{energy-mono}
\begin{split}
   &\quad  \int_M |d F(t)|^2 e^{\phi} \,d\mathrm{vol}_g\\
   &=\int^t_0 \frac{d}{ds}\left( \int_M |d F|^2 e^{\phi} \,d\mathrm{vol}_g \right)ds+ \int_M |d f|^2 e^{\phi}\,d\mathrm{vol}_g\\
    &\leq \int^t_0 \int_M\left(   \Delta |d F|^2-2|\nabla dF|^2 +\langle \nabla \phi,\nabla |dF|^2\rangle \right)e^{\phi} \,d\mathrm{vol}_g ds+ \int_M |d f|^2 e^{\phi}\,d\mathrm{vol}_g\\
    &=  \int^t_0 \int_M -2|\nabla dF|^2 e^{\phi} \,d\mathrm{vol}_g ds + \int_M |d f|^2 e^{\phi}\,d\mathrm{vol}_g.
\end{split}
\end{equation}

Together with the higher order regularity, it follows that there exists $t_k\to +\infty$ such that $F(t_k)$ sub-converges to $F_\infty:M\to N$ smoothly such that $\nabla d F_\infty\equiv 0$ on $M$. Moreover,  $g-F_\infty^*h$ remains $2$-nonnegative and hence $F_\infty$ is also area non-increasing. Thanks to the smooth convergence, $F_\infty$ is homotopic to the initial data $f$ and hence not homotopically trivial. 

We now claim that differential of $F_\infty$ has largest $\ell$ singular values being $1$. We will omit the sub-script. 
Now applying the Bochner formula in Lemma~\ref{lma:evo-F} for the time independent map $F=F_\infty$, we see that 
\begin{equation} \label{eqn:rigidity-harmonic}
\begin{split}
   0&= \int_M\Delta |d F|^2  \,d\mathrm{vol}_g\\
   &= \int_M 2|\nabla d F|^2 +2R^{ij}F_i^\a F_j^\b h_{\a\b} -2g^{kl}g^{ij} (F^*\mathrm{Rm}_h)_{iklj}
   \,d\mathrm{vol}_g\\
   &= \int_M 2(R^\phi)^{ij}F_i^\a F_j^\b h_{\a\b} -2g^{kl}g^{ij} (F^*\mathrm{Rm}_h)_{iklj}
   \,d\mathrm{vol}_g-2\int_M   \nabla^i \nabla^j \phi \cdot H_{ij} \,d\mathrm{vol}_g\\
   &= \int_M 2(R^\phi)^{ij}F_i^\a F_j^\b h_{\a\b} -2g^{kl}g^{ij} (F^*\mathrm{Rm}_h)_{iklj}
   \,d\mathrm{vol}_g.
   \end{split}
\end{equation}
where we have used $\nabla dF=0$ and Stoke's Theorem on the last equality.

We now proceed as in the proof of Proposition~\ref{prop:C1-after2non}. We need to trace carefully the inequality used. At $x\in M$, we let $\lambda_i$ be the singular value of $dF_\infty$ so that the eigenvalues of $F^*h$ with respect to $g$ are $\{\lambda_i^2\}_{i=1}^n$. We assume $\lambda_1^2\geq \lambda_2^2\geq...\geq \lambda_n^2$. We now extract geometric information from the equality in Bochner formula. 

We now examine each equality. Recall from the proof of Proposition~\ref{prop:C1-after2non}, all inequalities in \eqref{ineq-Bochner-1} and \eqref{ineq-Bochner-2} become equalities at each $x\in M$. Since $\mathrm{sec}(h)>0$ on $M$, we must have 
\begin{enumerate}
    \item[(i)]$\Ric^\phi_{ii}=\widetilde \Ric_{ii}$ for all $1\leq i\leq \ell$ where $\lambda_i\neq 0$;
    \item[(ii)] $|\lambda_i\lambda_j|$ is either $1$ or $0$, for all $1\leq i\neq j\leq \ell$;
    \item[(iii)] $\lambda_i=\lambda_j$ for all $1\leq i,j\leq \ell$;
    \item[(iv)] either
    \begin{enumerate}
        \item $m=\ell$ or;
        \item $\lambda_i\equiv 0$ for all $1\leq i\leq \ell $.
    \end{enumerate}
\end{enumerate}

Since $f$ (and hence $F$) is assumed to be homotopically non-trivial, the case (b) in (iv)  cannot happen since $\nabla dF=0$. Hence we must have $m\leq n$.  Using again $\nabla dF=0$, it suffices to consider one point. If for some $i\neq j$, $\lambda_i\lambda_j=0$, then we must have $\lambda_k\equiv 0$ for all $1\leq k\leq \ell$ by (iii) which is also impossible. Hence, we must have $\lambda_i^2\equiv 1$ for all $1\leq i\leq \ell=m$. In particular, $|dF|^2=\tr_g F^*h=\sum_{i=1}^\ell \lambda_i^2=\ell=m$ on $M$.


We now show that differential of $f$ also has largest $\ell$ singular values being $1$. By passing \eqref{energy-mono} to $F=F_\infty $, we see that 
\begin{equation}
    \begin{split}
      m \mathrm{Vol}_\phi(M,g)  =\int_M |d F_\infty|^2e^{\phi} d\mathrm{vol}_g \leq \int_M |d f|^2 e^{\phi} d\mathrm{vol}_g.
    \end{split}
\end{equation}

In term of singular value of $f$, the energy density on the right hand side can be re-written as 
\begin{equation}
|d f|^2=\frac12 \left( \sum_{i=1}^m \lambda_i^2 +\sum_{j=1}^m\lambda_j^2  \right) \leq m 
\end{equation}
where we have used the $2$-nonegative of $g-f^*h$. Therefore, we conclude that 
\begin{equation}
    \int_M  |d f|^2 e^{\phi}\,d\mathrm{vol}_g \leq m\mathrm{Vol}_\phi(M,g) .
\end{equation}
and hence the singular value of $df$ satisfies $\lambda_i^2=1$ for all $i=1,...,\ell=m$ everywhere. Thus $f$ is a Riemannian submersion (if $n>m$) or local isometry (if $n=m$)

 When $n=m$ and $f: (M,g)\rightarrow (N,h)$ is a local isometry, assumption (b) implies that 
\begin{equation}
    \inf\{ \Ric^\phi_g(u,u): u\in TM\}\geq \sup\{ \Ric_g(v,v): v\in TM\}
\end{equation}
and hence $\phi$ is sub-harmonic. Strong maximum principle implies that $\phi$ is constant on each connected component of $M$. Moreover, $M$ and $N$ must be Einstein manifolds since the Ricci curvature is constant. This completes the proof. 
\end{proof}

As a consequence of the method employed, we can relax the curvature conditions if $g-f^*h$ is strictly $2$-positive.
\begin{proof}[Proof of Theorem~\ref{intro-thm2}]
The proof is similar to that of Theorem~\ref{intro-main}. We only point out the necessary modifications. As in the proof of Theorem~\ref{intro-main}, we obtain a limiting smooth map $F=F_\infty:M\to N$ which is homotopic to $f$ and has $\nabla dF=0$. Now the stronger initial assumption implies that $g-F^* h$ is $2$-positive using Proposition~\ref{prop:tensorMP} with $\e>0$. This in particular implies $F$ is strictly area decreasing. 

We now claim that $F_\infty$ is homotopically trivial.  
It suffices to improve the curvature terms appeared in \eqref{ineq-Bochner-1} and \eqref{ineq-Bochner-2}, and thus in \eqref{eqn:rigidity-harmonic}. Denote $\lambda_i$ to be the singular value of $dF_\infty$ with respect to $g$ and $h$. Since $\frac12 (\lambda_i^2+\lambda_j^2)\leq 1-\e$ for some $\e>0$, $|\lambda_i\lambda_j|\leq 1-\e$ for all $i\neq j$ and hence, 
\begin{equation}
    \begin{split}
        \sum_{i,k=1}^n 2\lambda_i^2 \lambda^2_k \tilde R_{ikki}&\leq  \sum_{i,k=1}^\ell  2(1-\e)|\lambda_i \lambda_k| \tilde R_{ikki}\\
        &\leq (1-\e)\sum_{i,k=1}^\ell (\lambda_i^2+\lambda_k^2) \tilde R_{ikki}\\
        &= 2(1-\e) \sum_{i=1}^\ell  \lambda_i^2 \left(\sum_{k=1}^\ell  \tilde R_{ikki}\right)\\
        &\leq 2(1-\e) \sum_{i=1}^\ell  \lambda_i^2 \tilde R_{ii}. 
    \end{split}
\end{equation}

In particular, integrating the energy density yields 
\begin{equation}
    0\geq 2\e \int_M (R^\phi)^{ij}H_{ij}\,d\mathrm{vol}_g\geq 0
\end{equation}
If $\Ric^\phi$ is positive at $x_0\in M$, then $H(x_0)=0$ and hence $H\equiv 0$ on $M$ using $\nabla dF_\infty=0$. This shows that $f$ is homotopically trivial. If $\phi$ is a constant, then we can compute using contracted Bianchi to deduce
\begin{equation}
\begin{split}
0=\int_M R^{ij}H_{ij} \,d\mathrm{vol}_g=\frac12 \int_M \mathrm{scal}(g)\cdot \tr_gH \,d\mathrm{vol}_g\geq 0
\end{split}
\end{equation} 
so that $\mathrm{scal}(g)\cdot  \tr_gH \equiv 0$ on $M$. Suppose the scalar curvature of $g$ is positive somewhere, we have $H(x_0)=0$ for some $x_0\in M$. Using $\nabla d F_\infty=0$, $H$ is parallel and hence $H\equiv 0$ on $M$. This shows that $F_\infty$ is homotopically trivial and so does $f$. 
\end{proof}

It is worth mentioning that the proof is based on deforming a given map $f$ to a harmonic one using $\phi$-harmonic map heat flow. The stronger contractive property, i.e. $g-f^*h$ is 2-nonnegative is used in order to rule out finite time singularity. When $f$ is $\phi$-harmonic, i.e. $\Delta f=\langle df,d\phi\rangle $ on $M$. Thanks to the weaker assumptions in Proposition~\ref{prop:standard-boots}, we can conclude the following immediately following the same argument in that of Theorem~\ref{intro-main}.

\begin{cor}
Let $(M^n,g)$ and $(N^m,h)$ be two compact manifolds satisfying the assumptions in Theorem~\ref{intro-main}. If $f:M\to N$ is an area non-increasing $\phi$-harmonic map, then the following holds: 
\begin{enumerate}
\item[(I)] if $f$ is homotopy trivial, then $f$ is a constant map; 
\item[(II)] if $m>n$, then $f$ is  homotopy trivial; 
    \item[(III)] if $n>m$ and $f$ is not  homotopy trivial, then $f$ is a Riemanian submersion;
    \item[(IV)] if $m=n$ and $f$ is not  homotopy trivial, then $f$ is a local isometry and $\phi$ is a constant on each connected component of $M$. Moreover, $M,N$ are Einstein manifolds with positive sectional curvature.
    \end{enumerate}
\end{cor}
\begin{proof}
Since $f$ is $\phi$-harmonic, then we might take $F(t)\equiv f$ as a immortal solution to the $\phi$-harmonic map heat flow. Following the argument in Theorem~\ref{intro-main}, we conclude that if $f$ (and hence $F$) is not homotopy trivial, then we must have (II), (III) and (IV). It remains to show the rigidity of homotopy trivial map. It follows from the rigidity of equality in \eqref{ineq-Bochner-1} and \eqref{ineq-Bochner-2} that if we must have $\lambda_i\equiv 0$ for all $i$ and thus $f$ is constant. 
\end{proof}

Similarly,  we have the following:
\begin{cor}
    Let $(M^n,g)$ and $(N^m,h)$ be two compact manifolds satisfying the assumptions in Theorem~\ref{intro-thm2}. If $f:M\to N$ is a  strictly area decreasing $\phi$-harmonic map, then $f$ is constant if one of the following holds:
\begin{enumerate}
\item[(a)] $\Ric_g^\phi$ is positive somewhere;
\item[(b)] $\phi$ is a constant on each connected component of $M$ and $\mathrm{scal}(g)$ is positive somewhere.
\end{enumerate}
\end{cor}
\begin{proof}
It is easy to see from the proof of Theorem~\ref{intro-thm2} that if the limiting $\phi$-harmonic map is area decreasing, then $f^*h=H\equiv 0$ on $M$. This shows that $f$ is constant.  
\end{proof}

\end{document}